\documentclass[12pt,a4paper]{article}
\usepackage[left=30mm,top=27mm,right=30mm,bottom=25mm]{geometry}
\usepackage{indentfirst}
\usepackage{amsmath, amsthm, amssymb, times, amsfonts}
\usepackage{enumerate}
\usepackage{eufrak}
\usepackage[mathcal]{eucal}
\usepackage{latexsym}
\usepackage[pdftex]{graphicx}

\begin{document}

\begin{center}
	\textsc{Graph limits, groups and stochastic processes}\\
	\textsc{Workshop}\\
	\textsc{29 June -- 2 July 2014, MTA R\'enyi Institute and E\"otv\"os Lor\'and University, Budapest}
	\vspace{1cm}
	\hrule
	\vspace{1cm}
	{\Large\textsc{Measurable equidecompositions via combinatorics and group theory
}}\\[1cm] 
	{\Large {\L}ukasz Grabowski\footnote{Supported by EPSRC grant~EP/K012045/1}, Andr\'as M\'ath\'e\footnote{Supported by a Levehulme Trust Early Career Fellowship}, Oleg Pikhurko\footnote{Supported by ERC
grant~306493 and EPSRC grant~EP/K012045/1}}\\[3mm]{\large University of Warwick}
\end{center}

\vspace{0.5cm}

\begin{center}
	{\large \bf Abstract}
\end{center}

\newcommand{\R}{{\mathbb R}}
\newcommand{\E}{\mathbf{E}}

\newcommand{\I}[1]{{\mathbb #1}}
\newcommand{\sphere}{{\mathbb S}}
\newcommand{\ga}{\gamma}
\newtheorem{theorem}{Theorem}

We give a sketch of proof that any two (Lebesgue) measurable subsets of the unit sphere in $\R^n$, for $n\ge 3$, with non-empty interiors and of the same measure are equidecomposable using pieces that are measurable. 
\vspace{0.5cm}
\hrule
\vspace{0.5cm}
Recall that two subsets $A$ and $B$ of $\R^n$ are  \textit{equidecomposable} if  for some $m\,{\in}\,\mathbb N$ there exist isometries $\gamma_1,\dots,\gamma_m$ and partitions $A=A_1\sqcup\dots\sqcup A_m$ and $B=B_1\sqcup\dots\sqcup B_m$ such that $\gamma_i(A_i)=B_i$ for each $i\in\{1,\dots,m\}$. Equidecomposability for subsets of the unit sphere $\mathbb S^{n-1}\subseteq \R^n$ is defined likewise, with $\ga_i$'s being rotations.

Banach and Tarski \cite{banach+tarski:24} proved that, for $n\ge 3$, any two bounded subsets of $\R^n$ with non-empty interiors  are equidecomposable. 
Using earlier results of Banach~\cite{Banach1923}, they also showed that the above statement is false in $\R^2$ and $\R^1$. Indeed, in these cases the Lebesgue measure can be extended to an isometry-invariant finitely additive measure defined on all subsets, and so equidecomposable sets which are measurable must necessarily have the same measure. 

In this context, Tarski \cite{tarski:25} posed the following question: Is a $2$-dimensional disk equidecomposable with a $2$-dimensional square of the same area? This problem became known as \textit{Tarski's circle squaring}.  Some 65 years later, Laczkovich \cite{laczkovich:90} showed  that Tarski's circle squaring is indeed possible.

A natural further question is the existence of a \emph{measurable equidecomposition} where each piece $A_i$ has to be (Lebesgue) measurable.  In opposition to the Banach--Hausdorff--Tarski-type paradoxes when pieces are rearranged to form another set of \textit{different} measure, there is no obvious reason for Tarski's circle squaring to be impossible with measurable or even Borel pieces (\cite[Appendix C, Question 2]{wagon:btp}).

Although we do not resolve the possibility of measurable Tarski's circle squaring on the plane, we provide the answer for the analogous question in three and more dimensions. To simplify the discussion we do not present the most general statements; they will be included in the full version of this article (in preparation).

\begin{theorem}\label{thm-euclidean}
Let $n\ge 3$ and let $A,B\subseteq\R^n$ be two bounded measurable sets with non-empty interiors and of the same measure. Then $A$ and $B$ are measurably equidecomposable.
\end{theorem}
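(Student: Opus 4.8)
The plan is to reduce Theorem~\ref{thm-euclidean} to a single statement about measurable perfect matchings in graphs arising from group actions, and then to deduce that statement from a spectral gap. First note that measurable equidecomposability is an equivalence relation: symmetry is immediate, and transitivity follows by taking a common refinement of the two partitions, every operation preserving both measurability and finiteness of the number of pieces. A measurable Banach--Schr\"oder--Bernstein theorem also holds --- if $A$ is measurably equidecomposable with a measurable subset of $B$ and $B$ with a measurable subset of $A$, then $A$ and $B$ are measurably equidecomposable --- because the classical back-and-forth argument uses only countable unions, isometric images and complements, and leaves the resulting partition finite. Hence it is enough to show that every bounded measurable $A\subseteq\R^n$ with non-empty interior is measurably equidecomposable with the ball $B_A$ of volume $\mathrm{vol}(A)$ centred at the origin; the theorem then follows by transitivity through $B_A=B_B$. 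Using that $A$ has non-empty interior, hence contains a small ball that also embeds in $B_A$ (a common ``reference'' piece), together with Banach--Schr\"oder--Bernstein, this reduces in turn to the following: given two measurable sets of equal measure, build a measurable equidecomposition between them using only finitely many isometries chosen in a measurable way --- equivalently, a measurable bijection $\phi$ with $\phi(x)$ always in the isometry-orbit of $x$ and with the map sending $x$ to the group element carrying $x$ to $\phi(x)$ taking only finitely many values.

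Next I would fix the group. Let $\Gamma$ be a finitely generated free group of rank at least two, acting on the relevant probability space by measure-preserving transformations, freely off a null set and with dense orbits: for the spherical form of the problem $\Gamma\le\mathrm{SO}(n)$ is generated by generic rotations acting on $\sphere^{n-1}$, and for the Euclidean form one either passes to the sphere by polar coordinates or takes a free group of Euclidean isometries mixing rotations and translations with algebraic coefficients. The essential requirement is that the action have a \emph{spectral gap}: the averaging operator over a generating set has norm strictly below $1$ on mean-zero $L^2$. This is exactly where the hypothesis $n\ge 3$ enters, since $\mathrm{SO}(n)$ is non-amenable precisely for $n\ge 3$; the spectral gap for rotations with algebraic entries is known, by the work of Drinfeld and Margulis for $n\ge 5$ and of Bourgain--Gamburd and Benoist--de Saxc\'e for $n=3,4$. (In dimensions $1$ and $2$ the whole isometry group is amenable, which is why Tarski's circle squaring with measurable pieces remains open.) The null set of non-free points is handled by the usual absorption trick.

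The remaining core is a matching lemma: if $\Gamma$ acts freely and measure-preservingly with a spectral gap on a standard probability space $(X,\mu)$, and $A,B\subseteq X$ are measurable with $\mu(A)=\mu(B)$, then $A$ and $B$ admit a measurable perfect matching in the bipartite graphing in which $x\in A$ is joined to $\ga x\in B$ for $\ga$ in a fixed finite symmetric subset of $\Gamma$. I would follow the strategy of Lyons and Nazarov for factor-of-IID perfect matchings on non-amenable graphs. First, because the spectral gap makes the graph Laplacian invertible on mean-zero functions and $\mathbf 1_B-\mathbf 1_A$ has mean zero, one can solve a measurable transportation problem along the graphing and, after enlarging the generating set to a sufficiently large ball $B_\Gamma(r)$ in the word metric (so that almost every point of $A$ ``sees'' a definite amount of $B$ within bounded distance), produce a bounded measurable \emph{fractional} perfect matching. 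Then round it to an integral one by iteration: round, record the defect (a mean-zero function supported on a small set), solve the transportation problem again to correct it, round again; the spectral gap forces the measure of the defect set to decay geometrically, and the procedure converges in measure to a genuine measurable perfect matching. Converting the matching back into an equidecomposition is immediate, since matched pairs $(x,\ga x)$ sharing a common $\ga$ form a measurable piece.

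The main obstacle, I expect, is finiteness. The definition of equidecomposition allows only finitely many pieces, so the rounding iteration must be organised so that the word lengths of the group elements used stay uniformly bounded --- reusing a fixed window around each point rather than reaching ever farther out to repair successive defects --- and so that the limiting matching still uses only finitely many $\ga$'s; making this quantitative is the combinatorial heart of the argument. A secondary obstacle is the interface with Euclidean geometry: either establishing the spectral gap for a group that contains translations, or carrying out the polar-coordinate reduction in a way that is uniform and measurable in the radial coordinate --- this needs a fibred, ``relative'' version of the spectral-gap matching, since $\mathrm{SO}(n)$ acting trivially on the radius has no gap --- while still yielding finitely many pieces in total. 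It is worth stressing that Laczkovich's discrepancy estimates~\cite{laczkovich:90} already deliver a bounded-distance equidecomposition (but not a measurable one), and classical Banach--Tarski~\cite{banach+tarski:24} supplies the coarse geometry; the genuinely new content is the extraction of a \emph{measurable} matching, which is what the spectral gap --- rather than mere equidistribution --- is there to provide.
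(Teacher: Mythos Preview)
Your proposal shares the paper's high-level architecture --- spectral gap $\Rightarrow$ measurable perfect matching --- but diverges at the key technical step and omits two ingredients.

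The paper does not go through fractional matchings or Laplacian inversion. It converts the spectral gap into an \emph{expansion inequality}: for a suitable finite symmetric set $R$ of rotations, every measurable $U\subseteq A$ (or $B$) satisfies $\mu(N(U))\ge\min\bigl(\tfrac23\mu(A),\,2\mu(U)\bigr)$. Then it runs the Elek--Lippner construction: a sequence of Borel matchings $M_i$ admitting no augmenting path of length $\le 2i-1$, with the expansion bound forcing the unmatched set to have measure $O(2^{-i/2})$ and Borel--Cantelli giving a.e.\ stabilisation. This \emph{is} the Lyons--Nazarov strategy you cite, but realised via augmenting paths, not rounding. A crucial byproduct is that your ``finiteness obstacle'' simply never arises: all edges of the bipartite graph come from the fixed finite set $R$, so the limit matching uses at most $|R|$ isometries automatically. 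In your iterative rounding scheme this bound is not automatic, and ``round, record the defect, solve again'' is not yet a proof --- rounding a fractional matching need not produce a matching, and the repair step could a priori require ever-larger word balls, which is exactly the obstacle you yourself flag.

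Two further gaps. First, the passage from an a.e.\ matching to an \emph{exact} equidecomposition: your Banach--Schr\"oder--Bernstein reduction does not touch the residual null set of unmatched points. The paper handles it separately, verifying a finite Hall condition on each orbit and invoking Rado's theorem plus the Axiom of Choice there; the final pieces are then measurable rather than Borel. Second, the Euclidean case: the paper takes neither of your two routes (a translation-containing group with spectral gap, or a fibred polar-coordinate reduction). It argues directly that a version of the expansion inequality still holds for bounded subsets of $\R^n$, and then reruns the augmenting-path argument verbatim. (Minor historical slip: Margulis and Sullivan cover $n\ge 5$, Drinfel'd does $n=3,4$; you have these swapped, and Bourgain--Gamburd/Benoist--de~Saxc\'e are not needed here.)
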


Apart from the higher dimensional measurable Tarski's circle squaring, Theorem~\ref{thm-euclidean} also answers in the affirmative Question 3.14 from Wagon's book \cite{wagon:btp} whether a 
regular tetrahedron and a cube in $\R^3$ of the same volume are measurably equidecomposable. This can be viewed as the measurable version of Hilbert's third problem. 

To indicate the methods used, we give a sketch of the proof of the analogous theorem for subsets of $\sphere^{n-1}$, and only ``up to measure zero". Afterwards we briefly mention what is needed to pass to Euclidean spaces and obtain exact equidecompositions (i.e.\ without removing subsets of measure zero).

\begin{theorem}\label{thm-spheres}
Let $n\ge 3$ and let  $A,B\subseteq\sphere^{n-1}$ be (Lebesgue) measurable sets with non-empty interiors and of the same measure. Then there exist sets $A',B'\subseteq \sphere^{n-1}$ of measure zero such that $A\setminus A'$ and $B\setminus B'$ are equidecomposable with pieces that are Borel.  
\end{theorem}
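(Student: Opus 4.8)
\medskip
The plan is to realise the desired equidecomposition inside the orbit equivalence relation of a carefully chosen countable group of rotations, and to build it by a measurable transportation (flow) argument whose feasibility is forced by the spectral gap of the rotation action on $\sphere^{n-1}$.

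First I would fix a finite set $S\subseteq SO(n)$ with the property that $S$ acts with a spectral gap on the space of mean-zero $L^2$-functions on $\sphere^{n-1}$. Such an $S$ exists precisely because $n\ge 3$: this amounts to the positive solution of the Ruziewicz problem on $\sphere^{n-1}$, established by Margulis and Sullivan for $n\ge 5$ (via Kazhdan's property (T)) and by Drinfeld for $n=3,4$, and it is the one place where the hypothesis $n\ge 3$ is genuinely used. Let $\Gamma=\langle S\rangle$. Every non-identity rotation fixes only a proper subsphere of $\sphere^{n-1}$, so the set $N$ of points with non-trivial $\Gamma$-stabiliser is a $\Gamma$-invariant Borel null set; on $X:=\sphere^{n-1}\setminus N$ the group $\Gamma$ acts freely and preserves the normalised measure $\mu$. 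Using inner regularity to replace $A$ and $B$ by Borel subsets of the same respective measure and then intersecting with $X$ --- recording the (null) set of discarded points in $A'$ and $B'$ --- I may assume $A,B\subseteq X$ are Borel with $\mu(A)=\mu(B)$. Since the action of $\Gamma$ on $X$ is free, a finite-piece Borel equidecomposition of $A$ and $B$ using rotations from $\Gamma$ is exactly the same datum as a Borel bijection $\phi\colon A\to B$ for which there is a fixed finite set $F\subseteq\Gamma$ with $\phi(x)\in Fx$ for every $x$: the pieces are then $A_\ga=\{x\in A:\phi(x)=\ga x\}$ for $\ga\in F$, automatically disjoint by freeness, with images $\ga(A_\ga)$.

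The core of the argument is the construction of such a $\phi$. From the spectral gap one extracts, via the Cheeger inequality for the Schreier graph of the action of $\Gamma$ on $(X,\mu)$, a uniform measure-expansion bound: there is $c>0$ with $\mu\bigl(W\cup\bigcup_{s\in S}sW\bigr)\ge(1+c)\,\mu(W)$ whenever $\mu(W)\le\frac12$. Together with $\mu(A)=\mu(B)$, this expansion provides a measurable real-valued flow carrying $\mu|_A$ onto $\mu|_B$ along the edges of the Schreier graph, involving only boundedly many group elements (an expander supports flows along short paths). It remains to round this Borel fractional flow to an integral one, i.e.\ to an honest Borel bijection $\phi$ that is piecewise a single group element. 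This is done by a Borel rounding procedure in the spirit of Laczkovich's argument and its measurable refinements by Marks--Unger and by the present authors: one rounds the flow, corrects the resulting small Borel discrepancy by an auxiliary perfect-matching argument (in the style of Lyons--Nazarov) that is available because the graphing is non-amenable, and absorbs into a further null addition to $A'$ and $B'$ the points on which the rounding cannot be completed. The final bijection $\phi$ uses only the finitely many rotations occurring in the flow, so it witnesses that $A\setminus A'$ and $B\setminus B'$ are equidecomposable with finitely many Borel pieces. (At this ``up to measure zero'' level the non-empty-interior hypothesis is never used; it becomes relevant only afterwards, when upgrading to \emph{exact} equidecompositions in $\R^n$.)

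The main obstacle is the rounding step. Obtaining the fractional flow is a comparatively soft consequence of the spectral gap, and no geometry of the sphere beyond the gap is needed; but converting the real-valued Borel flow into a genuine finite-piece Borel equidecomposition --- keeping the entire construction Borel and the number of pieces finite while sacrificing only a set of measure zero --- is the delicate combinatorial heart of the proof.
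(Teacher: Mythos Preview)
Your route is genuinely different from the paper's. The paper never constructs a real-valued flow or rounds anything: it builds a single bipartite graph $\mathcal{G}$ on $A\sqcup B$ with edges coming from a fixed finite set $R$ of rotations, proves a measurable Hall condition with expansion factor $2$ for $\mathcal{G}$ (their Claim~1), and then runs the Lyons--Nazarov augmenting-path scheme directly, using the Elek--Lippner sequence of Borel matchings $(M_i)$ with no augmenting paths of length $\le 2i-1$. The expansion forces the unmatched set to decay geometrically, and Borel--Cantelli gives a.e.\ stabilisation of the $M_i$. Your flow-then-round-then-match outline is closer in spirit to the later Marks--Unger approach to Borel circle squaring than to what happens here.

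There is, however, a real gap, located exactly where you place the ``main obstacle'' and where your parenthetical about non-empty interior goes wrong. In the paper the interior hypothesis is \emph{not} deferred to the exact-equidecomposition upgrade: it is used at the very start of the proof of Theorem~2 to produce a finite $T\subseteq SO(n)$ with $\bigcup_{\tau\in T}\tau(A)=\bigcup_{\tau\in T}\tau(B)=\sphere^{n-1}$, and this exact covering is what drives Claim~1---one pigeonholes over $\tau\in T$ to find a single rotate of $B$ capturing a definite fraction of the expanded set $\bigcup_{\gamma\in S}\gamma(U)$, and the pigeonhole only works because the rotates of $B$ cover everything, not merely a set of measure $1-\varepsilon$. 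In your approach the same difficulty resurfaces in disguise: the Cheeger-type expansion you quote concerns arbitrary $W\subseteq\sphere^{n-1}$, but a flow from $A$ to $B$ ``involving only boundedly many group elements'' is, unwound, a fractional perfect matching in a bipartite graph on $A\sqcup B$ with a fixed \emph{finite} edge-label set---i.e.\ exactly a Hall condition relative to $B$, which is what Claim~1 supplies and what the interior buys. The Lyons--Nazarov matching you invoke to repair the rounding discrepancy needs the same ingredient. So either give an honest argument that bounded-length flows from $A$ to $B$ exist from the spectral gap alone with no hypothesis on $B$ beyond positive measure (this is not the Cheeger inequality, and is where the real work would lie), or use the interior hypothesis as the paper does; as written, the decisive step is asserted rather than proved.
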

\begin{proof}[Sketch of proof] To simplify the notation assume that $A$ and $B$ are disjoint.

Let $\mu$ be the uniform measure on $\sphere^{n-1}$ normalised to be a probability measure. A key ingredient is the following \textit{spectral gap property}. There exist rotations 
$\ga_1,\ldots, \ga_k\in SO(n)$ and a real $c>0$ such that the averaging operator $T\colon L^2(\sphere^{n-1},\mu) \to L^2(
\sphere^{n-1},\mu)$ defined by
 $$
 (Tf)(x)=\frac1{k} \sum_{i=1}^k f(\ga_i(x)),\qquad f\in L^2(\sphere^{n-1},\mu),\ x\in\sphere^{n-1},
 $$
 satisfies $\|Tf\|_2\le (1-c)\,\|f\|_2$ for every $f\in L^2(\sphere^{n-1},\mu)$ with $\int f(x)\,d\mu(x)=0$.

The existence of such rotations was shown independently by Margulis~\cite{margulis:80} and Sullivan~\cite{sullivan:81} for $n\ge 5$, and then by Drinfel'd~\cite{drinfeld:84} for the remaining cases $n=3,4$.
(Note that we cannot have spectral gap for $n=2$ as the group $SO(2)$ is Abelian.)

Using \cite[Proposition 6.3.2 and Remark 6.3.3(ii)]{bekka+harpe+valette:kpt}), it can be shown that the spectral gap is actually equivalent to the following \emph{expansion property}. For every $\eta>0$ there is a finite set $S$ of rotations such that for every measurable subset $U\subseteq \sphere^{n-1}$ we have
\begin{equation}\label{eq:admits-expansion}
\mu \left(\,\cup_{\ga\in S}\, \ga(U)\,\right) \ge \min(1-\eta,\,\mu(U)/\eta).
\end{equation}

Since $A$ and $B$ have non-empty interiors and $\sphere^{n-1}$ is compact, we can find a finite set $T$ of rotations such that $\sphere^{n-1} = \cup_{\ga\in T}\, \ga(A) = \cup_{\ga\in T}\, \ga(B)$. Let us fix $\eta>0$ such that $\eta < \min(\mu(A)/3,(2\,|T|)^{-1})$, and let $S$ be as in \eqref{eq:admits-expansion}. 
By enlarging $S$, we can assume that $S$ is \emph{symmetric}, that is, $S^{-1}=S$. Let 
 $$
 R:=\{\tau^{-1}\gamma\colon \gamma\in S,\ \tau\in T\}\cup \{\gamma\tau \colon \gamma\in S,\ \tau\in T\}\subseteq SO(n).
 $$
 Note that $R$ is also symmetric.

Consider the bipartite graph $\cal G$ whose set of vertices is $A\cup B$ and with an edge between $x\in A$ and $y\in B$ if for some $\ga\in R$ we have $\ga(x)=y$.

\vspace{8pt}\noindent\textbf{Claim 1.} Let $U$ be a measurable set contained entirely either in $A$ or in $B$, and let $N(U)$ be the set of neighbours of $U$ in $\cal G$. Then
\begin{equation}\label{eq-hall-condition}
\mu(N(U)) \ge  \min\left(\frac{2}{3}\,\mu(A), 2\mu(U)\right).
\end{equation}
\begin{proof}[Proof of Claim]
By symmetry, assume that $U\subseteq A$.  There are
two cases to consider when we apply \eqref{eq:admits-expansion} to $U$.

Suppose first that $\mu(\,\cup_{\ga\in S}\, \ga(U)\,)\ge 1-\eta$. The set $X:=\cup_{\ga\in S}\, \ga(U)$ satisfies $\mu(\sphere^{n-1}\setminus X)\le \eta<\mu(A)/3$. In particular, fixing
some $\tau\in T$, we have that
$N(U)\supseteq \tau^{-1}(X)\cap B$ cover at least $2/3$ of the measure of $B$, as required.

It remains to consider the case when  the minimum  in \eqref{eq:admits-expansion}  
is given by $\mu(U)/\eta$. By the choice of $\eta$ we have
$$
\mu \left(\,\cup_{\ga\in S}\, \ga(U)\,\right) \ge \mu(U)/\eta \ge 2\,|T|\,\mu(U).
$$
Since the whole sphere is covered by $|T|$ copies of $B$, there is $\tau\in T$ such that
$$
\mu \left(\tau(B)\cap \cup_{\ga\in S}\, \ga(U)\right) \ge 2\,\mu(U).
$$
By definition $\tau^{-1}\gamma\in R$ for every $\gamma\in S$. Therefore, we get that
$$
\mu(N(U)) = \mu \left(B\cap \cup_{\ga\in R}\, \ga(U)\right) \ge \mu \left(\tau(B)\cap \cup_{\ga\in S} \,\ga(U)\right) \ge 2\,\mu(U),
$$
 proving Claim~1.\mbox{}\end{proof} 

Recall that a \textit{matching} in a graph is a set of edges such that no two edges  share a vertex. 
A matching $M$ in the graph $\cal G$ is called \textit{Borel} if there exist disjoint Borel subsets $A_\ga\subseteq A$ indexed by $\ga\in R$ such that 
 \begin{equation}\label{eq:M}
 M=\cup_{\gamma\in R} \big\{\{x,\gamma(x)\} : x\in A_\gamma\big\}.
\end{equation}

Clearly, in order to finish the proof it is enough to find a Borel matching in $\cal G$ such that the set of unmatched vertices has measure zero. As noted by Lyons and Nazarov in \cite[Remark 2.6]{lyons+nazarov:11}, the expansion property \eqref{eq-hall-condition} suffices for this. 
Let us outline their strategy. 

Recall that an \textit{augmenting path} for a matching $M$ is a path which starts and ends at an unmatched vertex and such that every second edge belongs to $M$. A \textit{Borel augmenting family} is a Borel subset $U\subseteq A\cup B$ and a finite sequence $\ga_1,\ldots, \ga_l$ of elements of $R$ such that (i) for every $x\in U$ the sequence $y_0,\dots,y_l$, where
$y_0=x$ and $y_j=\gamma_j(y_{j-1})$ for $j=1,\dots,l$,
forms an augmenting path and (ii) for every distinct $x,y\in U$ the corresponding augmenting paths are vertex-disjoint.

As shown by Elek and Lippner \cite{elek+lippner:10}, there exists a sequence $(M_i)_{i\in\I N}$ of Borel matchings such that $M_i$ admits no augmenting path of length at most $2i-1$ and $M_{i+1}$ can be obtained from $M_i$ by iterating the following at most countably many times: pick some
Borel augmenting family $(U,\gamma_1,\dots,\gamma_l)$ with $l\le 2i+1$ and flip (i.e.\ augment) the current matching along all paths given by the family. See \cite{elek+lippner:10} for more details.

Our task now is to show, using Claim~1, that the measure of of vertices not matched by $M_i$ tends to zero as $i\to\infty$ and that the sequence $(M_i)_{i\in \I N}$ stabilises a.e.

Let us fix $i\ge 1$ and let $X_0$ be the subset of $A$ consisting of vertices that are not matched by $M_i$. An \textit{alternating path of length $l$} is a sequence of distinct vertices $x_0,\dots,x_l$ such that (i) $x_0\in X_0$, (ii) for odd $j$ we have $x_jx_{j+1}\in M_i$, and
(iii) for even $j$ we have $x_jx_{j+1}\in E({\cal G})\setminus M_i$. Let $X_j$ consist of the end-vertices of alternating paths of length at most $j$.
Clearly for all $j$ we have $X_j\subseteq X_{j+1}$ and so, in particular, $\mu(X_{j+1})\ge \mu(X_j)$. 
For $j\ge 1$, let $X_j':= X_j\setminus X_{j-1}$.

\vspace{8pt}\noindent\textbf{Claim 2.} For every odd $j\le 2i-1$ we have $\mu(X_{j}') = \mu(X_{j+1}')$ and $\mu(X_j\cap B)\le \mu(X_{j+1}\cap A)$.

\begin{proof}[Proof of Claim]
All vertices in $X_j'$ are covered by the matching $M_i$, for otherwise we would have an augmenting path of length $j$.
It follows that $M_i$ gives a bijection between $X_{j}'$ and $X_{j+1}'$. If we take the sets $A_\gamma$
that represent $M_i$ as in (\ref{eq:M}), then the partitions $\cup_{\gamma\in R}\, A_\gamma$ and $\cup_{\gamma\in R}\, \gamma(A_\gamma)$ induce a Borel equidecomposition between $X_{j}'$ and $X_{j+1}'$, so these sets
have the same measure, as required. 

The second part (i.e.\ the inequality) follows analogously from the fact that $M_i$ gives an 
injection of $X_j\cap B$ into $X_{j+1}\cap A$ (with $X_0$ being the set of vertices missed by this injection).
\end{proof}

Let $k$ be even, with $2\le k\le 2i-2$. Let $U=X_k\cap A$. We have that 
$N(U)= X_{k+1}\cap B$. By Claim~1, 
$$
\mu(X_{k+1}\cap B)=\mu(N(U))\ge \min\left(\frac23\,\mu(A),
2\mu(U)\right). $$  
 
If $\mu(X_{k+1}\cap B)\ge \frac23\,\mu(A)$ then, by Claim~2, $\mu(X_{k+2}\cap A)\ge \mu(X_{k+1}\cap B)\ge
\frac23\,\mu(A)$ and thus 
 $$
 \mu(X_{k+2})= \mu(X_{k+1}\cap B)+\mu(X_{k+2}\cap A)\ge \frac43\, \mu(A).
 $$ 

Now, suppose that $\mu(X_{k+1}\cap B)\ge 2\mu(U)$. By applying Claim~2 for $j=k-1$ we obtain
 $$
 \mu(X_{k+1}')=\mu(X_{k+1}\cap B)-\mu(X_{k-1}\cap B)\ge 2\mu (U)-\mu (U)=\mu (U).
 $$
 Again, by Claim~2, $\mu(X_{k+2}')=\mu(X_{k+1}')$ and 
$\mu(X_k)=\mu(X_{k-1}\cap B)+\mu(U)\le 2\mu(U)$. Thus 
 $$
 \mu(X_{k+2})=\mu(X_k)+\mu(X_{k+1}')+\mu(X_{k+2}')\ge \mu(X_k)+2\mu(U)\ge 2 \mu(X_k).
 $$ 

 Thus the measure of $X_k$ expands by factor at least $2$
when we increase $k$ by 2, unless $\mu(X_{k+2})\ge \frac43\,\mu(A)$. Also, this conclusion formally holds
for $k=0$, when $X_1=N(X_0)$. 

By using induction, we  conclude that, for all even $k$ with $0\le k\le 2i$,
 \begin{equation}\label{eq:Xk}
	\mu(X_k) \ge \min\left(\frac43\, \mu(A), 2^{k/2} \mu(X_0)\right).
 \end{equation}

In the same fashion we define $Y_0$ to be the subset of $B$ consisting of vertices not matched by $M_i$ and let  $Y_j$ consist of the end-vertices of alternating paths that start in $Y_0$ and have length at most $j$. As before, we obtain that $Y_j$'s satisfy the analogue of~(\ref{eq:Xk}).

The sets $X_{i-1}$ and $Y_{i}$ are disjoint for otherwise we would find and augmenting path of length at most $2i-1$. It follows that they cannot each have measure more than $\mu(A) =\frac12\, \mu(A\cup B)$. Since $\mu(X_0) = \mu(Y_0)$ we conclude that
 \begin{equation}\label{eq:X0Y0}
 \mu(X_0\cup Y_0) \le 2\,\mu(A)\cdot \left(\frac{1}{2}\right)^{\lfloor(i-1)/2\rfloor}.
 \end{equation}

As we noted before, $M_{i+1}$ arises from $M_i$ by flipping 
augmenting paths of length at most $2i+1$ in a Borel way. When one such path is flipped, two
vertices are removed from the current set $X_0\cup Y_0$ of unmatched vertices.  Using this observation and the fact that each rotation is measure-preserving, one can show that the set of vertices covered by the symmetric difference $M_{i+1}\triangle M_i$ has measure at most $(2i+2)\mu(X_0\cup Y_0)/2$. We know by 
(\ref{eq:X0Y0}) that this goes to 0 exponentially fast with $i$; in particular, it is summable over $i\in\I N$. The Borel-Cantelli Lemma implies that the sequence of matchings $(M_i)_{i\in\I N}$ stabilises a.e. 

This finishes the sketch of proof of Theorem~\ref{thm-spheres}. 
\mbox{}\end{proof}

The argument that leads to (\ref{eq-hall-condition}) can be adopted to show that $|N(X)|\ge 2\, |X|$ for every \textit{finite} subset $X$ of $A$ (and of $B$). In the slightly simpler 
case when both $A$ and $B$ are open 
subsets of $\sphere^{n-1}$, the desired bound on $|N(X)|$ can be deduced by applying Claim~1 to the union of spherical caps of sufficiently small radius $\epsilon>0$ around points of $X$. By a result of Rado~\cite{rado:42},
this guarantees that each connectivity component of $\cal G$ has a perfect matching. 
The (exact) measurable equidecomposition of $A$ and $B$ can be obtained by modifying the Borel equidecomposition from Theorem
\ref{thm-spheres} on suitably chosen sets of measure zero where we use Rado's theorem and the Axiom of Choice. Since all sets of measure zero are measurable, we obtain a measurable decomposition between
$A$ and $B$ (without any exceptional sets $A'$ and $B'$).

The spectral gap property for $\R^n$, as stated in the proof of Theorem \ref{thm-spheres}, fails. However, we could argue that a suitable reformulation of the expansion property \eqref{eq:admits-expansion} still holds for bounded sets (while the rest of the proof of Theorem~\ref{thm-euclidean} is essentially the same as above).

\bibliographystyle{plain}

\end{document}